\documentclass[11pt]{amsart}

\usepackage{xcolor}
\usepackage[colorlinks=false,linkbordercolor={white},citebordercolor={white}]{hyperref}
\usepackage[capitalise,noabbrev]{cleveref}
\usepackage[utf8]{inputenc}

\usepackage{amscd}
\usepackage{amsmath}
\usepackage{amssymb}
\usepackage{comment}
\usepackage{enumitem}
\usepackage{fullpage}
\usepackage{mathrsfs}
\usepackage{mathtools}
\usepackage{physics}
\usepackage{relsize}
\usepackage{stmaryrd}
\usepackage{tikz-cd}
\usepackage{tikz}
\usepackage{url}

\numberwithin{equation}{section}
\newtheorem{lemma}[equation]{Lemma}

\newtheorem{prop}[equation]{Proposition}

\newtheorem{claim*}{Claim}
\newtheorem{thm}[equation]{Theorem}

\theoremstyle{definition}

\newtheorem{example}[equation]{Example}
\newtheorem{ex}[equation]{Example}

\theoremstyle{remark}
\newtheorem{notation}[equation]{Notation}
\newtheorem{remark}[equation]{Remark}

\newcommand{\mfrak}[1]{\mathfrak{#1}}

\renewcommand{\k}{\Bbbk}

\newcommand{\m}{\mfrak{m}}

\newcommand{\Hom}{\operatorname{Hom}}

\newcommand{\D}{\msf{D}}
\newcommand{\kk}{\mathbf{k}}

\newcommand{\del}{\partial}
\newcommand{\Mod}{\operatorname{Mod}}

\newcommand{\dGamma}{\mathbf{R}\Gamma}

\def\nc{\newcommand}
\nc{\on}{\operatorname}
\nc{\bideg}{\on{bideg}}
\nc{\xra}{\xrightarrow}
\def\phi{\varphi}
\nc\cB{\mathcal{B}}
\def\th{\on{th}}

\def\D{\on{D}}
\def\Db{\D^{\on{b}}}

\nc{\into}{\hookrightarrow}
\nc{\onto}{\twoheadrightarrow}
\nc{\LL}{\mathbf{L}}
\nc{\RR}{\mathbf{R}}
\nc{\Perf}{\on{Perf}_{\on{gr}}}
\nc{\nat}{\natural}
\nc{\tors}{\on{tors}}
\nc{\Tors}{\on{Tors}}

\def\Mod{\on{Mod}}
\nc{\qgr}{\on{qgr}}
\nc{\Qgr}{\on{Qgr}}
\nc{\fQgr}{\on{Qgr}^{\on{f}}}
\nc{\colim}{\on{colim}}
\def\Z{\mathbb{Z}}
\nc{\Ext}{\on{Ext}}
\def\Dsing{\D_{\on{gr}}^{\on{sg}}}
\def\Dsg{\D_{\on{gr}}^{\on{sg}}}
\def\Dbgr{\Db_{\on{gr}}}
\nc{\om}{\omega}
\nc{\w}{\widetilde}
\nc{\PP}{\mathbb{P}}
\nc{\mf}{\on{mf}}
\nc{\OO}{\mathcal{O}}
\nc{\Proj}{\on{Proj}}
\nc{\Qcoh}{\on{Qcoh}}
\nc{\coh}{\on{coh}}
\nc{\Tor}{\on{Tor}}
\nc{\Modf}{\Mod^{\on{f}}}

\nc{\ce}{\coloneqq}
\def\co{\colon}
\def\k{\kk}
\nc{\Com}{\on{Com}}
\nc{\A}{\mathcal{A}}
\nc{\B}{\mathcal{B}}
\nc{\C}{\mathcal{C}}
\nc{\I}{\mathcal{I}}
\nc{\M}{\mathcal{M}}
\nc{\Sh}{\on{Sh}}
\nc{\QCoh}{\on{Qcoh}}
\nc{\Coh}{\on{coh}}
\nc{\fQCoh}{\QCoh^{\on{f}}}
\nc{\ov}{\overline}
\nc{\End}{\on{\underline{End}}}
\def\MR#1{}

\nc{\Qgrf}{\Qgr^{\on{f}}}
\nc{\uHom}{\underline{\Hom}}
\nc{\Inj}{\mathrm{Inj}}
\nc{\proj}{\mathrm{Proj}}
\nc{\spec}{\mathrm{Spec}}
\nc{\xla}{\xleftarrow}
\nc{\Dqgr}{\D_{\qgr}}
\nc{\DQgr}{\D_{\Qgr}}
\nc{\cK}{\mathcal{K}}
\nc{\from}{\leftarrow}
\nc{\cd}{\on{cd}}
\nc{\N}{\mathcal{N}}
\nc{\F}{\mathcal{F}}
\nc{\cC}{\mathcal{C}}
\nc{\oa}{\overline{a}}
\nc{\ua}{\underline{a}}
\nc{\pd}{\on{pd}}
\nc{\wM}{\widetilde{M}}
\nc{\wN}{\widetilde{N}}
\nc{\wC}{\widetilde{C}}
\nc{\wD}{\widetilde{D}}
\nc{\cD}{\mathcal{D}}
\nc{\shHom}{\mathcal{H}om}
\nc{\ushHom}{\underline{\mathcal{H}om}}
\nc{\shExt}{\mathcal{E}xt}
\nc{\E}{\mathcal{E}}
\nc{\wF}{\widetilde{F}}
\nc{\RHom}{\text{{\bf R}$\Hom$}}
\nc{\RuHom}{\text{{\bf R}$\underline{\Hom}$}}
\nc{\uExt}{\underline{\Ext}}
\nc{\cgeq}{\succcurlyeq}
\nc{\cle}{\prec}

\def\MR#1{}

\author{Michael K. Brown}
\author{Souvik Dey}
\author{Geoffrey Fatin}
\author{Guanyu Li}
\author{Mahrud Sayrafi} 
\author{Tim Tribone}

\newcommand{\Addresses}{{
	\vskip\baselineskip
  	\footnotesize
  	\noindent \textsc{Department of Mathematics and Statistics, Auburn University} \par\nopagebreak
	\noindent \textit{E-mail address:} \texttt{mkb0096@auburn.edu}
 \vskip\baselineskip
  	\noindent \textsc{Department of Mathematical Sciences, University of Arkansas} \par\nopagebreak
	\noindent \textit{E-mail address:} \texttt{souvikd@uark.edu}
 \vskip\baselineskip
  	\noindent \textsc{Departments of Mathematics and Physics, Cornell University} \par\nopagebreak
	\noindent \textit{E-mail addresses:} \texttt{glf55@cornell.edu, gl479@cornell.edu} 
  	\vskip\baselineskip
  	\noindent \textsc{Department of Mathematics \& Statistics, McMaster University} \par\nopagebreak
	\noindent \textit{E-mail address:} \texttt{sayrafi.m@gmail.com}
	\vskip\baselineskip
  	\noindent \textsc{Department of Mathematics, University of Utah} \par\nopagebreak
	\noindent \textit{E-mail address:} \texttt{tim.tribone@utah.edu}
    }}

\makeatletter
\@namedef{subjclassname@2020}{%
  \textup{2020} Mathematics Subject Classification}
\makeatother
\subjclass[2020]{13D03, 14F08}

\begin{document}
\title{Orlov's functors in Macaulay2}

\begin{abstract}
Given a commutative and graded Gorenstein ring $R$ with associated projective variety~$X$\!, a theorem of Orlov gives fully faithful embeddings from the graded singularity category of $R$ to the derived category of $X$\!, or vice versa, depending on the degree of the canonical bundle of~$X$\!. We describe algorithms for computing these embeddings that can be implemented in \verb|Macaulay2|.% We describe a \verb|Macaulay2| package called \verb|OrlovFunctors| for computing these embeddings.
\end{abstract}

\numberwithin{equation}{section}

\maketitle

\setcounter{section}{0}

\section{Introduction}

A landmark theorem of Orlov~\cite[Theorem 2.5]{Orlov2009} establishes a close relationship between the derived category of an arithmetically Gorenstein projective variety and the singularity category of its affine cone. Before stating Orlov's Theorem precisely, we fix some notation and recall some terminology. Let $\k$ be a field, $S = \k[x_0, \dots, x_n]$, $I \subseteq S$ a homogeneous ideal, $R \ce S/ I$, and $X \ce \Proj(R) \subseteq~\PP^n$ the projective variety associated to $R$. Let $d$ denote the Krull dimension of $R$. We assume $R$ is \emph{Gorenstein}, meaning that $\Ext^d_R(\k, R) = \k(a)$ for some $a \in \Z$, and $\Ext^i_R(\k, R) = 0$ for all $i \ne d$ (see Notation~\ref{notation} for our conventions concerning grading twists). The integer $a$ is called the \emph{Gorenstein parameter of $R$}; the canonical bundle of $X$ is $\OO_X(-a)$. For instance, if $I$ is generated by a regular sequence $f_1, \dots, f_c$ of homogeneous forms, then $R$ is Gorenstein, and~$a = n+1 - \sum_{i = 1}^c \deg(f_i)$.  

Let $\Dbgr(R)$ and $\Db(X)$ denote the bounded derived categories of finitely generated graded $R$-modules and coherent $\OO_X$-modules, respectively. Let $\Perf(R) \subseteq \Dbgr(R)$ be the full triangulated subcategory given by perfect complexes, i.e. bounded complexes of finitely generated graded free modules. The quotient $\Dbgr(R) / \Perf(R)$ is called the \emph{graded singularity category of $R$} and denoted $\Dsg(R)$. The following theorem is due to Orlov (see Theorem~\ref{thm:orlov} for a more precise statement):

\begin{thm}[\cite{Orlov2009} Theorem 2.5]
\label{thm:orlovintro}
If $a \ge 0 $ (resp. $a \le 0$), then for each $t \in \Z$, there is a fully faithful functor $\Phi_t \co \Dsg(R) \into \Db(X)$ (resp. $\Psi_t \co \Db(X) \into \Dsg(R)$). If $a = 0$, then the functors~$\Phi_t$ and $\Psi_t$ are inverse equivalences. 
\end{thm}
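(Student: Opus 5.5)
We follow Orlov's original strategy: compare both categories with a common intermediate category, namely the derived category of the quotient of $\Dbgr(R)$ by torsion modules, and then use semiorthogonal decompositions. Write $\qgr(R)$ for finitely generated graded $R$-modules modulo finite-length ones. By Serre's theorem, $\qgr(R) \simeq \coh(X)$, so $\Db(X) \simeq \Db(\qgr R)$. For each $t \in \Z$ let $\Dbgr(R_{\ge t})$ denote the bounded derived category of finitely generated graded modules concentrated in degrees $\ge t$. Consider two subcategories of it. The first is $\mathcal{S}_{\ge t}$, generated by the torsion modules $\k(e)$ with $e \le -t$. The second is $\mathcal{P}_{\ge t}$, generated by the free modules $R(e)$ with $e \le -t$.

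\textbf{Step 1: two semiorthogonal decompositions of $\Dbgr(R_{\ge t})$.} First, the projection $\pi \co \Dbgr(R_{\ge t}) \to \Db(\qgr R)$ admits the fully faithful right adjoint $\RR\omega_t = \bigoplus_{i \ge t} \RR\Gamma(X, -(i))$, truncated in degrees $\ge t$. This gives
\[
\Dbgr(R_{\ge t}) = \langle \RR\omega_t\Db(\qgr R), \mathcal{S}_{\ge t}\rangle .
\]
Second, the quotient $\Dbgr(R_{\ge t}) \to \Dsg(R)$ is shown to be essentially surjective, with kernel $\mathcal{P}_{\ge t}$ admitting a suitable adjoint. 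This gives
\[
\Dbgr(R_{\ge t}) = \langle \mathcal{P}_{\ge t}, \mathcal{T}_t\rangle, \qquad \mathcal{T}_t \simeq \Dsg(R).
\]
Essential surjectivity follows by taking high syzygies and truncating; the needed orthogonality is $\Hom(R(e), -) = 0$ on modules in degrees $>-e$ together with the Gorenstein vanishing of $\Ext^i_R(M,R)$ for high syzygies.

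\textbf{Step 2: compare using the Gorenstein parameter.} Local duality for Gorenstein $R$ gives $\RR\Gamma_\m(R) \simeq R^\vee(a)[-d]$. Hence $\RR\omega(\pi R)$ differs from $R$ only in degrees below $a$, up to the relevant shift. From this one computes that when $a \ge 0$, the category $\mathcal{S}_{\ge t}$ and the image of $\mathcal{P}_{\ge t}$ interlace to give a finer decomposition
\[
\Dbgr(R_{\ge t}) = \langle \pi_t^{*}\OO(-t-a+1), \dots, \pi_t^{*}\OO(-t), \mathcal{T}_t\rangle,
\]
with $\mathcal{T}_t$ embedded inside $\RR\omega_t\Db(\qgr R)$. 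Composing then yields $\Phi_t \co \Dsg(R) \simeq \mathcal{T}_t \into \Db(X)$. The case $a \le 0$ is symmetric, with $\mathcal{S}$ and $\mathcal{P}$ exchanged. It gives $\Db(X) = \langle \ldots, \mathcal{T}_t\rangle$ with the extra pieces being torsion modules $\k(e)$, $-a$ of them, and hence $\Psi_t$. When $a = 0$ no extra pieces occur, both embeddings are equivalences, and $\Phi_t$ and $\Psi_t$ are mutually inverse by construction.

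The main obstacle is Step 2: verifying the orthogonality between $\mathcal{P}_{\ge t}$ and $\mathcal{S}_{\ge t}$ in a window of degrees whose length is exactly $|a|$. This is the only place the Gorenstein hypothesis enters. I would first compute $\RHom_R(\k(e), R(e'))$ and $\RHom_R(R(e),\k(e'))$ explicitly. The first is concentrated in cohomological degree $d$ and equals $\k(a+e'-e)$. The second is $\k(e'-e)$. I would then read off exactly which twists are mutually orthogonal.
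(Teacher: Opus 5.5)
Your outline follows Orlov's argument (the paper gives no proof of its own, only the citation \cite{Orlov2009}), and Step~1 is fine in outline. Step~2 as you plan it, however, has a genuine gap.

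What must be proved there are containments between \emph{orthogonal complements}. For $a\ge 0$ you need $\mathcal{T}_t = \Dbgr(R_{\ge t})\cap{}^{\perp}\mathcal{P}_{\ge t}$ to lie in $\RR\omega_t\Db(\qgr R) = \Dbgr(R_{\ge t})\cap\mathcal{S}_{\ge t}^{\perp}$, and for $a\le 0$ a containment in the other direction. Objects of $\mathcal{T}_t$ are defined by a vanishing condition, not as an explicit span of the $\k(e)$ and $R(e')$. So computing $\RuHom$ between these generators and ``reading off which twists are orthogonal'' does not control $\Hom(\k(e), T[*])$ for $T\in\mathcal{T}_t$.

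The missing ingredient is Gorenstein duality on all of $\Dbgr(R)$, not just local duality for $R$. Since $R$ has finite injective dimension, $\mathbb{D} = \RuHom_R(-,R)$ is an anti-autoequivalence with $\mathbb{D}R(e) = R(-e)$ and $\mathbb{D}\k(f) = \k(a-f)[-d]$. On one side, $\Hom(M,R(e)[n]) = H^n(\mathbb{D}M)_e$. On the other, $\Hom(\k(f),M[n])\cong\Hom(\mathbb{D}M,\k(a-f)[n-d])$, which is nonzero exactly when the minimal free resolution of $\mathbb{D}M$ has a generator of internal degree $f-a$ in cohomological degree $d-n$. That resolution is generated in degrees $>-k$ iff $H^*(\mathbb{D}M)$ is concentrated in degrees $>-k$, so you get
\[
{}^{\perp}\mathcal{P}_{\ge k} = \mathcal{S}_{\ge k-a}^{\perp} \quad \text{in } \Dbgr(R), \text{ for all } k.
\]
This identity is the real content of Step~2.

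For $a\ge0$ it gives $\mathcal{T}_t\subseteq\RR\omega_t\Db(\qgr R)$, because $\mathcal{S}_{\ge t}\subseteq\mathcal{S}_{\ge t-a}$. You then still need $\mathcal{P}_{\ge t}\cap\mathcal{S}_{\ge t}^{\perp} = \langle R(-t-a+1),\dots,R(-t)\rangle$. Its nontrivial inclusion uses minimality of a perfect complex $P$ together with $\RuHom_R(\k,P)\cong\k(a)[-d]\otimes_R P$, not just orthogonality of generators.

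For $a\le0$ it gives $\RR\omega_{t-a}\Db(\qgr R)\subseteq\mathcal{T}_t$. Combining this with $\Dbgr(R_{\ge t}) = \langle\k(-t),\dots,\k(-t+a+1),\Dbgr(R_{\ge t-a})\rangle$ yields $\mathcal{T}_t = \langle\k(-t),\dots,\k(-t+a+1),\RR\omega_{t-a}\Db(\qgr R)\rangle$. This is exactly the paper's $\Psi_t = \bigoplus_{j\ge t-a}\RR\Gamma(X,-(j))$.

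The same $\mathbb{D}$ also supplies the adjoint you leave unspecified in Step~1: dualize, resolve minimally, split off the finite subcomplex generated in degrees $\le -t$, and dualize back. This is precisely the recipe the paper uses to define $\Phi_t$.

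Several statements in Step~2 are also wrong as written.
\begin{itemize}
\item The decomposition $\langle R(-t-a+1),\dots,R(-t),\mathcal{T}_t\rangle$ is a decomposition of $\RR\omega_t\Db(\qgr R)$, not of $\Dbgr(R_{\ge t})$, which still contains $\mathcal{S}_{\ge t}$.
\item For $a\le 0$ you wrote $\Db(X) = \langle\dots,\mathcal{T}_t\rangle$ with torsion pieces. Torsion modules vanish in $\Db(X)$, and the roles are reversed: it is $\mathcal{T}_t\simeq\Dsg(R)$ that decomposes, with a copy of $\Db(X)$ as one piece.
\item The case $a\le 0$ is not a literal mirror image. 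The naive analogue $\RR\omega_t\Db(\qgr R)\subseteq\mathcal{T}_t$ does hold, but it puts the $\k(e)$ on the other side. You must use $\RR\omega_{t-a}$ to get the stated decomposition.
\item The cone of $R\to\RR\omega\pi R$ is $\RR\Gamma_\m(R)[1]\simeq R^\vee(a)[1-d]$, concentrated in internal degrees $\le -a$. So $R$ and $\RR\omega\pi R$ agree in degrees $>-a$, not ``below $a$''.
\end{itemize}
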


We recall the definitions of the functors $\Phi_t$ and $\Psi_t$ in Section~\ref{sec:background}. Theorem~\ref{thm:orlovintro} is a powerful result, with applications in algebraic geometry, commutative algebra, representation theory, and beyond: we refer to \cite[Section 1]{BS15} and \cite[Section 1]{BS25} for additional background. Our goal is to explain how we implement the functors $\Phi_t$ and $\Psi_t$ in a forthcoming \verb|Macaulay2| \cite{M2} package.  %summarize the features of the \verb|Macaulay2| \cite{M2} package \verb|OrlovFunctors|, which implements the functors $\Phi_t$ and~$\Psi_t$. 

This article is a preliminary draft; when the accompanying \verb|Macaulay2| package is complete, we will update this article with examples of our code in action. 

\subsection*{Overview of the paper.} 

In Section 2, we provide some background on Orlov's Theorem (Theorem~\ref{thm:orlovintro}). In particular, we explicitly describe the fully faithful embeddings $\Phi_t$ and $\Psi_t$, and we compute them in some examples. We explain our algorithms for computing $\Phi_t$ and $\Psi_t$ in Section~3.%, and we discuss some example computations with our \verb|Macaulay2| package \verb|OrlovFunctors| in Section 4. 

\begin{notation}
\label{notation}
Throughout, $\k$ denotes a field. We index our complexes cohomologically. If $C$ is a complex with differential $d_C$, and $i \in \Z$, then $C[i]$ is the complex with $C[i]^j = C^{i+j}$ and differential~$(-1)^id_C$. Given a graded module over a $\Z$-graded ring $A$, we denote its $i^{\th}$ graded component by~$M_i$. The $j^{\th}$ twist of $M$ is the graded module $M(j)$ with components $M(j)_i \ce M_{i + j}$. If $C$ and $D$ are complexes of graded $A$-modules, then $\Hom_A(C, D)$ denotes the set of chain maps from $C$ to $D$ of internal and homological degree 0, and $\uHom_A(C, D)$ is the complex of graded $A$-modules whose $j^{\th}$ term is the graded $A$-module $\bigoplus_{i \in \Z} \Hom_A(C, D(i)[j])$ and whose differential sends a map $\alpha$ of homological degree $j$ to the map $\del_D \alpha - (-1)^{j}\alpha\del_C $. We write the derived $\Hom$ complex from $C$ to $D$ as $\RuHom_A(C, D)$, and we set  $\uExt^j_A(C, D) \ce H^j \RuHom_A(C, D)$.
%Let $S = k[x_0, \dots, x_n]$ and $I \subseteq S$ a homogeneous ideal such that $R = S/I$ is a Gorenstein ring of dimension $d$, i.e. such that $\Ext^d_R(\k, R) = \k(a)$ for some $a \in \Z$, and $\Ext^i_R(\k, R) = 0$ for all $i \ne d$. Let $X \ce \Proj(R) \subseteq \PP^n$. %Given complexes $C$ and $D$ of graded $R$-modules, we write $\RHom_X(\wC,\wD)$ for the complex of $\k$-vector spaces given by the derived $\Hom$ complex from $\wC$ to $\wD$. We also write $\ushHom_X(\wC, \wD)$ for the complex of sheaves associated to $\uHom_R(C, D)$, and we let $\uExt^j_X(\wC, \wD)$ denote the graded $R$-module $\bigoplus_{i \in \Z} \Ext^j_X(\wC, \wD(i))$, where $\Ext^j_X(\wC, \wD(i)) \ce H_{-j}\RHom_X(\wC, \wD(i))$.   %We include the underline in the notation $\ushHom_X(-,-)$ to emphasize that this is the complex of sheaves associated to the complex $\uHom_R(C, D)$.  %Let $\RHom_X(\cC, \cD)$ denote the $\k$-vector space of morphisms in the derived category of $\OO_X$-modules. %We write $\Ext^j_X(\cC, \cD) \ce \RHom_X(\cC, \cD[j])$, and we let $\uExt^j_X(\cC, \cD)$ denote the graded $R$-module $\bigoplus_{i \in \Z} \Ext^j_X(\cC, \cD)$. 
\end{notation}

\section{Background}
\label{sec:background}
The following is (a special case\footnote{Orlov's Theorem holds more generally for \emph{noncommutative} Gorenstein rings. In principle, it should be feasible to implement this stronger version of the theorem in M2; but since the functionality for free resolutions over noncommutative rings in M2 is currently somewhat limited, we choose to focus only on the commutative case.} of) a theorem of Orlov:

\begin{thm}[\cite{Orlov2009} Theorem 2.5]
\label{thm:orlov}
Let $q \co \Dbgr(R) \to \Dsing(R)$ be the canonical functor. The objects $\OO_X(j) \in \Db(X)$ and $q \k(j) \in \Dsg(R)$ are exceptional for all $j \in \Z$, and there are functors 
$$
\Phi_t \colon \Dsing(R) \to \Db(X) \quad \text{and} \quad
\Psi_t \colon \Db(X) \to  \Dsing(R)
$$
for all $t \in \Z$ such that the following hold:
\begin{enumerate}
\item If $a > 0$, then $\Phi_t$ is fully faithful, and there is a semiorthogonal decomposition
$$
\Db(X) = \langle \OO_X(-t - a + 1), \dots, \OO_X(-t), \Phi_t\Dsing(R) \rangle.
$$
\item If $a < 0$, then $\Psi_t$ is fully faithful, and there is a semiorthogonal decomposition
$$
\Dsing(R) = \langle q \kk(-t), \dots, q \kk (-t+a+1), \Psi_t \Db(X)\rangle.
$$
\item If $a = 0$, then $\Phi_t$ and $\Psi_t$ are inverse equivalences.
\end{enumerate}
\end{thm}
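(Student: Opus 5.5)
Orlov's proof runs through an intermediate category: the graded derived category $\Dbgr(R)$, cut into pieces by truncation at a degree $t$. I would follow that route. Let $\Dbgr(R_{\ge t})$ be the full subcategory of complexes whose terms are concentrated in internal degrees $\ge t$. Let $\mathcal{P}_{\ge t}$ (resp. $\mathcal{P}_{<t}$) be the triangulated subcategory of $\Perf(R)$ generated by $R(e)$ with $e \le -t$ (resp. $e > -t$). Let $\mathcal{S}_{\ge t}$ (resp. $\mathcal{S}_{<t}$) be generated by $\k(e)$ with $e \le -t$ (resp. $e>-t$).

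\textbf{Step 1: two decompositions of $\Dbgr(R)$.} Using $\uHom_R(R(e), -)$ and the minimal free resolutions of finitely generated graded modules, I would check that
\[
\Dbgr(R) = \langle \mathcal{P}_{<t}, \Dbgr(R_{\ge t})\rangle \quad\text{and}\quad \Dbgr(R_{\ge t}) = \langle \mathcal{S}_{\ge t}, \mathcal{T}_t\rangle .
\]
The first decomposition comes from truncating a module into its degrees $\ge t$ part plus a perfect piece generated in high degrees. The second is the analogous statement on the other side.

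\textbf{Step 2: compare with the target categories.} The key input is the Gorenstein hypothesis. It gives $\RuHom_R(\k,R)\cong \k(a)[-d]$, and hence, by local duality, local cohomology of $R$ concentrated in degree $d$ with $H^d_\m(R)$ vanishing in degrees $> -a$. From this I would derive the following.
\begin{itemize}
\item The quotient functor $q$ restricted to $\mathcal{T}_t$ gives an equivalence $\mathcal{T}_t\simeq \Dsg(R)$, using the decomposition $\Dbgr(R_{\ge t})=\langle \Dbgr(R_{\ge t})\cap\Perf(R)\ \text{stuff}\rangle$. More precisely, one shows $\Dbgr(R_{\ge t}) = \langle \mathcal{D}_t, \mathcal{P}_{\ge t}\rangle$ with $\mathcal{D}_t \simeq \Dsg(R)$.
\item Serre's theorem $\Db(X)\simeq \Dbgr(R)/\text{torsion}$, combined with $\RR\Gamma_{\ge t}$, identifies $\Db(X)$ with the component $\mathcal{T}_t$ orthogonal to $\mathcal{S}_{\ge t}$.
\end{itemize}
Exceptionality of $\OO_X(j)$ and $q\k(j)$ follows from the same computations. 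For $\OO_X(j)$ we need $H^i(X,\OO_X)=0$ for $0<i<\dim X$ and $H^0=\k$, which is arithmetic Cohen–Macaulayness. The top cohomology vanishes unless $a<0$, and that case is handled by the degree bound on $H^d_\m(R)$. For $q\k(j)$, the computation uses $\uExt_R(\k,\k)$ modulo perfect complexes together with the Gorenstein shift.

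\textbf{Step 3: compare the two pieces.} The Gorenstein condition shows that $\mathcal{P}_{\ge t}$ and $\mathcal{S}_{\ge t}$ are related by a twist by $a$. The comparison then splits into three cases.
\begin{itemize}
\item If $a\ge0$, one has $\mathcal{P}_{\ge t}\supseteq$ the piece corresponding to $\mathcal{S}_{\ge t+a}$. Refining the decompositions yields $\mathcal{T}_t=\langle \pi R(-t-a+1),\dots,\pi R(-t), \mathcal{D}_t\rangle$. Defining $\Phi_t$ as the composite $\Dsg(R)\simeq\mathcal{D}_t\into \mathcal{T}_t\simeq\Db(X)$ gives (1).
\item If $a\le 0$, the roles reverse and $\Psi_t$ is defined symmetrically, giving (2).
\item If $a=0$, both inclusions are equalities, giving (3).
\end{itemize}

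The main obstacle is Step 3. It requires verifying the orthogonality $\Hom(\mathcal{P}_{\ge t},\mathcal{S}_{\ge t}\text{-twisted})$ and that the generated subcategories coincide up to exactly $|a|$ exceptional objects. I would attack it by computing $\RuHom_R(\k,R)$ and $\RuHom_R(R,\k)$ degreewise and tracking the internal degree window $[-t-a+1,-t]$.
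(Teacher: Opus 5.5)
The paper gives no proof of this theorem; it cites Orlov. Your outline follows Orlov's route through $\Dbgr(R_{\ge t})$, but two steps would fail as written.

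\textbf{Exceptionality.} Your argument here is wrong, and in fact the claim cannot hold for all $a$. Since $\omega_R\cong R(-a)$, local duality gives $H^d_\m(R)_j\cong\Hom_\k(R_{-a-j},\k)$. So for $d\ge 2$,
\[
H^{d-1}(X,\OO_X)\cong H^d_\m(R)_0\cong\Hom_\k(R_{-a},\k).
\]
The top cohomology therefore vanishes exactly when $a>0$, not ``unless $a<0$''. For $a\le 0$, the degree bound on $H^d_\m(R)$ is precisely what shows it is nonzero. For a smooth plane cubic ($a=0$) one has $\Ext^1_X(\OO_X,\OO_X)\cong\k$. Since $\Phi_0(\k)\cong\OO_X[d-1]$ there, $q\k$ is not exceptional either.

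What is true, and all that (1) and (2) use, is this: $\OO_X(j)$ is exceptional when $a>0$, and $q\k(j)$ is exceptional when $a<0$. Both follow once you check that $R(j)$, resp.\ $\k(j)$, which are exceptional in $\Dbgr(R)$, lie in a subcategory on which sheafification, resp.\ $q$, is fully faithful.

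\textbf{Steps 2--3.} These omit the idea that actually makes the comparison work. The bookkeeping is also inconsistent in three places:
\begin{enumerate}
\item $\mathcal{T}_t$ is said to be equivalent both to $\Dsg(R)$ and to $\Db(X)$.
\item The perfect piece in Step 1 is $F_{\prec t}$, generated in degrees $<t$ and obtained by brutally truncating a minimal resolution. It does not come from $M_{\ge t}\subseteq M$.
\item Step 1 uses the opposite semiorthogonality convention from the theorem and from Step 3.
\end{enumerate}

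Inside $\Dbgr(R_{\ge t})$ the two pieces are the orthogonals $\mathcal{S}_{\ge t}^\perp$ and ${}^\perp\mathcal{P}_{\ge t}$, which have no explicit generators. Comparing them is therefore a vanishing statement about \emph{all} objects of one piece; for instance, when $a\ge 0$ you need $\Hom(\k(e),M[m])=0$ for every $M\in{}^\perp\mathcal{P}_{\ge t}$. Computing $\RuHom_R(\k,R)$ and $\RuHom_R(R,\k)$ on generators only identifies the $|a|$ leftover objects.

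The missing lemma is that both pieces are cut out by the same condition, up to a shift by $a$:
\begin{enumerate}
\item $M\in\mathcal{S}_{\ge t}^\perp$ iff $\RR\Gamma_\m(M)_{\ge t}\simeq 0$, equivalently $M\simeq\bigoplus_{j\ge t}\RR\Gamma(X,\widetilde M(j))$.
\item Local duality gives $\uExt^m_R(M,R)_e\cong\Hom_\k(H^{d-m}_\m(M)_{-e-a},\k)$, so $M\in{}^\perp\mathcal{P}_{\ge t}$ iff $\RR\Gamma_\m(M)_{\ge t-a}\simeq 0$.
\end{enumerate}
The existence of the decomposition with ${}^\perp\mathcal{P}_{\ge t}$ also needs Gorenstein duality: dualize, brutally truncate at $-t+1$, and dualize back. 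This is exactly the paper's recipe for $\Phi_t$.

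For $a\ge 0$ the lemma gives ${}^\perp\mathcal{P}_{\ge t}\subseteq\mathcal{S}_{\ge t}^\perp$, with complement $\mathcal{P}_{\ge t}\cap\mathcal{S}_{\ge t}^\perp=\langle R(-t-a+1),\dots,R(-t)\rangle$.

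For $a<0$ the argument is not symmetric at the same index. The inclusion $\mathcal{S}_{\ge t}^\perp\subseteq{}^\perp\mathcal{P}_{\ge t}$ puts $\Db(X)$ to the \emph{left} of $q\k(-t),\dots,q\k(-t+a+1)$. To get the stated order you must compare ${}^\perp\mathcal{P}_{\ge t}$ with $\mathcal{S}_{\ge t-a}^\perp\cap\Dbgr(R_{\ge t-a})$, using the triangle $T_{\ge t-a}\to T\to T/T_{\ge t-a}$. This is why $\Psi_t(\F)=\bigoplus_{j\ge t-a}\RR\Gamma(X,\F(j))$.
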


The functor~$\Psi_t$ sends $\F \in \Db(X)$ to $\bigoplus_{j \ge t - a} \dGamma(X, \F(j)) \in \Dsg(R)$~\cite[Remark 2.6]{Orlov2009}. Before we describe the functor $\Phi_t$, we must establish some notation and terminology. %We're using inequalities with a funny font to distinguish from, for example, the notation $P_{< j}$, which typically denotes the subcomplex of $P$ spanned by elements of degree less than $j$ (notice: this is different from $P_{\prec j}$).
First, given a complex~$P$ of graded free $R$-modules, let $P_{\prec j}$ denote the subcomplex given by summands of the form $R(s)$ with $s > -j$ (i.e. the summands generated in degree $<j$), and let $P_{\succcurlyeq j} \ce P /P_{\prec j}$. Given $C \in \Db(R)$, a \emph{minimal free resolution} of $C$ is a quasi-isomorphism $F \xra{\simeq} C$ such that $F$ is a minimal free complex where $F_i = 0$ for $i \ll 0$. Minimal free resolutions of such complexes exist and are unique up to isomorphism~\cite[Proposition 4.4.1]{roberts}. With all this in mind, the functor~$\Phi_t$ is defined as follows 
(see \cite[Section 2.2]{BW24} or \cite{BS15}). Given $C \in \Db(R)$, let $F$ be its minimal free resolution, and let $G$ be the minimal free resolution of $\uHom_R(F_{\succcurlyeq t}, R)$. The functor $\Phi_t$ sends $C$ to the complex of sheaves associated to the complex $\uHom_R(G, R)_{\prec t}$ of graded $R$-modules.

\medskip
The functors $\Phi_t$ and $\Psi_t$ do not commute with grading twists. However, we do have:

\begin{lemma}
\label{lem:twist}
Let $C \in \Dsing(R)$, $\cC \in \Db(X)$, and $t,j \in \Z$.
\begin{enumerate}
\item $\Phi_t(C(j)) = \Phi_{t+j}(C)(j)$.
\item $\Psi_{t}(\cC(j)) = \Psi_{t+j}(\cC)(j)$.
\end{enumerate}
\end{lemma}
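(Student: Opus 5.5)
Both parts will follow by tracking how each ingredient in the definitions of $\Phi_t$ and $\Psi_t$ interacts with the grading shift $(j)$. There is no homological content beyond the fact that twisting is an exact autoequivalence of all categories involved. It is compatible with minimal free resolutions, with $\uHom_R(-,R)$, with sheafification, and with $\dGamma$.

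\textbf{Part (2).} By the formula recalled after Theorem~\ref{thm:orlov}, $\Psi_t(\cC(j)) = \bigoplus_{i \ge t-a} \dGamma(X, \cC(i+j))$. Reindexing with $k = i+j$ gives $\bigoplus_{k \ge t+j-a} \dGamma(X,\cC(k))$, with the summand in position $k$ now sitting where position $k-j$ did. On the other hand, the graded module $M=\bigoplus_{k\ge t+j-a}\dGamma(X,\cC(k))$ representing $\Psi_{t+j}(\cC)$ has $k$-th component $\dGamma(X,\cC(k))$. Its twist $M(j)$ therefore has $i$-th component $\dGamma(X,\cC(i+j))$ for $i+j\ge t+j-a$, that is, for $i \ge t-a$. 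This is exactly the complex defining $\Psi_t(\cC(j))$, as an equality of complexes of graded modules and hence in $\Dsg(R)$.

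\textbf{Part (1).} Let $F$ be the minimal free resolution of $C$. Then $F(j)$ is the minimal free resolution of $C(j)$, since twisting preserves minimality and quasi-isomorphisms.
\begin{itemize}
\item A summand $R(s)$ of $F$ becomes $R(s+j)$ in $F(j)$, which is generated in degree $-s-j$. Hence $F(j)_{\succcurlyeq t} = (F_{\succcurlyeq t+j})(j)$.
\item Next, $\uHom_R(F(j)_{\succcurlyeq t},R) = \uHom_R(F_{\succcurlyeq t+j},R)(-j)$. If $G$ is the minimal free resolution of $\uHom_R(F_{\succcurlyeq t+j},R)$, then $G(-j)$ is the minimal free resolution of the twisted complex.
\item Dualizing again gives $\uHom_R(G(-j),R) = \uHom_R(G,R)(j)$.
\item For truncations, the same bookkeeping as in the first bullet gives $(P(j))_{\prec t} = (P_{\prec t+j})(j)$ for any free complex $P$. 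Applying this with $P=\uHom_R(G,R)$ yields $\uHom_R(G,R)(j)_{\prec t} = \big(\uHom_R(G,R)_{\prec t+j}\big)(j)$.
\end{itemize}
Finally, sheafification commutes with twists, $\widetilde{M(j)} = \widetilde M(j)$. Hence $\Phi_t(C(j)) = \Phi_{t+j}(C)(j)$.

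The main care needed is the sign and direction bookkeeping in the truncation identities $(P(j))_{\succcurlyeq t} = (P_{\succcurlyeq t+j})(j)$ and its $\prec$ analogue. I would verify these on a single summand $R(s)$ using the convention that $R(s)$ is generated in degree $-s$. I would also note that uniqueness of minimal free resolutions up to isomorphism makes the identifications canonical up to isomorphism, so the equalities hold as isomorphisms of objects.
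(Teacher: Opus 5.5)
Your proposal is correct and follows the same route as the paper. The paper proves (1) from the truncation identities $F(j)_{\succcurlyeq t-j} = F_{\succcurlyeq t}(j)$ and $F(j)_{\prec t-j} = F_{\prec t}(j)$, which are yours after replacing $t$ by $t-j$, together with the compatibility of twists with minimal resolutions, $\uHom_R(-,R)$, and sheafification. It calls (2) clear, and your reindexing argument simply makes that explicit.
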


\begin{proof}
(1) follows from the formulas $F(j)_{\succcurlyeq t-j} = F_{\succcurlyeq t}(j)$ and $F(j)_{\prec t-j} = F_{\prec t}(j)$, and (2) is clear.
\end{proof}

\begin{ex}
Since $\Phi_t$ is defined on $\Dsg(R)$, it should send perfect complexes to zero; let us confirm this (see \cite[Example 2.4]{BW24}). Let $F$ be a perfect complex, i.e. a bounded complex of finitely generated graded free $R$-modules. It is equal to its own minimal free resolution. The complex $\uHom_R(F_{\succcurlyeq t}, R)$ is also a perfect complex, so $G = \uHom_R(F_{\succcurlyeq t}, R)$ in this case. We therefore have $\uHom_R(G, R) = F_{\succcurlyeq t}$, and so $\Phi_t(F) = (F_{\succcurlyeq t})_{\prec t} = 0$.
\end{ex}

We recall that a finitely generated graded $R$-module $M$ is \emph{maximal Cohen-Macaulay (MCM)} if $\uExt^i_R(M, R) = 0$ for $i > 0$. 

\begin{ex}[\cite{BW24} Example 2.5]
\label{ex:MCM}
Assume $R$ is a hypersurface, i.e. $R = S / (f)$ for some homogeneous $f \in S$. If $M$ is an MCM $R$-module that is generated in degree 0, then $\Phi_0(M) = \widetilde{M}$. It therefore follows from Lemma~\ref{lem:twist}(1) that $\Phi_t(M(-t)) \cong \widetilde{M}(-t)$. 
\end{ex}

\begin{example}
Let us show that, if $a = 0$, then $\Phi_0(k) \cong \OO_X[d-1]$. Let $F$ be the minimal free resolution of $\k$. Since $F = F_{\succcurlyeq 0}$, we have quasi-isomorphisms $\uHom_R(F_{\succcurlyeq 0}, R) \simeq \RuHom_R(\k, R) \simeq \k[-d]$, where the second quasi-isomorphism follows from the Gorenstein property. Thus, $G = F[-d]$, and so we have $\Phi_0(\k) = \left(\uHom_R(F, R)_{\prec 0}\right)[d]$, which is the brutal truncation of $\uHom_R(F, R)[d]$ in degrees less than~$-d$. By the Gorenstein property again, we have a quasi-isomorphism $\uHom_R(F, R)[d] \simeq \k$. The exact triangle $\left[
\Phi_0(\k) \to 0 \to \OO_X[d] \to\right]
$ then yields the desired isomorphism. Lemma~\ref{lem:twist}(1) thus implies that, if $a = 0$, then $\Phi_t(\k(-t)) \cong \OO_X(-t)[d-1]$ for all $t\in \Z$. 
\end{example}

\begin{ex}
Let $R = \k[x_0, \dots, x_4] / (x_0x_1, x_2x_3x_4)$ and $M = R / (x_0x_2)$; we now now compute the complex $\Psi_3\Phi_3(M)$. The minimal free resolution of $M$ starts as follows:
$$
R \from R(-1)^2 \from R(-2)^2 \oplus R(-3) \from R(-3)^2 \oplus R(-4)^2 \from R(-4)^2 \oplus R(-5)^2 \oplus R(-6) \from \cdots.
$$
We therefore have
$$
F_{\succcurlyeq 3} = \left[ R(-3) \from R(-3)^2 \oplus R(-4)^2 \from R(-4)^2 \oplus R(-5)^2 \oplus R(-6) \from \cdots \right].
$$
where $R(-3)$ is in cohomological degree $-2$. We remark that $F_{\succcurlyeq 3}$ has nonzero cohomology in degrees $-2$ and $-3$, and it is exact elsewhere. 
Dualizing, we get a complex of the form
$$
\uHom_R(F_{\succcurlyeq 3}, R) = \left[R(3) \to R(3)^2 \oplus R(4)^2 \to R(4)^2 \oplus R(5)^2 \oplus R(6) \to \cdots \right],
$$
where $R(3)$ is in cohomological degree $2$. According to \verb|Macaulay2|, its minimal free resolution $G$ is of the form
$$
R(2)^2 \from R(1)^2 \from R \from R(-3) \from R(-4)^2  \from R(-5)^2 \oplus R(-6) \from \cdots,
$$
where $R(2)^2$ is in cohomological degree $3$. The functor $( - )_{\prec 3}$ does nothing to $\uHom_R(G, R)$, and so~$\Phi_3(M)$ is the sheafification of $\uHom_R(G, R)$:
$$
\OO_X(-2)^2 \to \OO_X(-1)^2 \to \OO_X \to \OO_X(3) \to \OO_X(4)^2 \to \OO_X(5)^2 \oplus \OO_X(6)  \to \cdots.
$$
Thus, $\Psi_3 \Phi_3(M)$ is the object 
\begin{equation}
\label{Psi3}
\bigoplus_{j \ge 3} 
\RR\Gamma\left(X, \OO_X(-2 + j)^2 \to \OO_X(-1 + j)^2 \to \OO_X(j) \to \OO_X(3 + j) \to \OO_X(4 + j)^2 \to \cdots\right)
\end{equation}
in $\Dsing(R)$. A straightforward calculation shows each of the terms in $\Phi_3(M)$ has no sheaf cohomology in positive degrees, and so \eqref{Psi3} is isomorphic to the complex %First, I claim that each term in $\Phi_3(M)$ has no higher cohomology (I think this may always happen when applying $\Psi_i$ to $\Phi_i(M)$, for any $M$). To see this, observe that $\OO_X(j)$ is quasi-isomorphic to the Koszul complex
%$$
%\OO_{\PP^n}(j) \from \OO_{\PP^n}(j -2) \oplus \OO_{\PP^n}(j - 3) \from %\OO_{\PP^n}(j - 5).
%$$
%As long as $j \ge 1$, none of the terms in this exact sequence have higher cohomology. Thus, the complex doesn't either (by a spectral sequence argument). It follows that, for the same reason, $\OO_X(j)$ has no higher cohomology as long as $j \ge 1$. Thus, \eqref{Psi3} looks like:
$$
\bigoplus_{j \ge 3} 
\Gamma\left(X, \OO_X(-2 + j)^2 \to \OO_X(-1 + j)^2 \to \OO_X(j) \to \OO_X(3 + j) \to \OO_X(4 + j)^2 \to \cdots\right).
$$
We conclude that
$
\Psi_3\Phi_3(M) = \left[ R(-2)_{\ge 3}^2 \to R(-1)_{\ge 3}^2 \to R_{\ge 3} \to R(3)_{\ge 3} \to R(4)_{\ge 3}^2 \to \cdots\right].
$
Since $X \subseteq~\PP^4$ is a Calabi-Yau complete intersection, there is an isomorphism $\Psi_3\Phi_3(M) \cong M$ in $\Dsing(R)$. This is not obvious from our calculation, illustrating the subtlety of Orlov's Theorem. 
\end{ex}

\section{Implementing Orlov's functors}
%%%%%%%%%%%%%%%%%%%%%%%%%%%%%%%%%%%%%%%%

Fix $t \in \Z$. We now explain algorithms for implementing the functors 
$
\Phi_t \co \Db(X) \to \Dsing(R)$ and $\Psi_t \co \Dsing(R) \to \Db(X)
$
from Theorem~\ref{thm:orlov} in \verb|Macaulay2|. 

\subsection{The functor $\Phi_t$} The delicate part of implementing the functor $\Phi_t$ is that the minimal free resolutions involved in its definition are nearly always infinite. Nevertheless, we can choose finite models at each step in the process. Let us first record the following two technical statements.

\begin{prop}
\label{prop:dual}
Let $C$ be a complex of finitely generated graded $R$-modules. If $H^j(C) = 0$ unless $s \le j \le u$, then $\uExt^j_R(C, R) = 0$ unless $-u \le j \le d - s$, where $d = \dim(R)$. 
\end{prop}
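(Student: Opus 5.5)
Since $C$ has bounded cohomology, the plan is to reduce to the case of a single module by induction on the number of nonzero cohomology modules of $C$, using the truncation triangles and the long exact sequence of $\uExt_R(-,R)$. The base case is the key input: if $M$ is a finitely generated graded $R$-module, then $\uExt^j_R(M,R)=0$ for $j<0$ trivially, and for $j>d$ because $R$ is Gorenstein of dimension $d$, so $R$ has injective dimension $d$ (in the graded setting, finite injective dimension follows from the vanishing $\Ext^i_R(\k,R)=0$ for $i\neq d$, and its value equals the depth $d$). I will take this standard fact about Gorenstein rings as known. Thus $\uExt^j_R(M,R)$ is concentrated in $0\le j\le d$. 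For a shifted module $M[-s]$ (cohomology in degree $s$) we get $\RuHom_R(M[-s],R)=\RuHom_R(M,R)[s]$, so $\uExt^j_R(M[-s],R)=\uExt^{j+s}_R(M,R)$ is nonzero only for $-s\le j\le d-s$, which lies in $[-u,d-s]$ when $s\le u$.

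For the induction step, I will consider the smart truncation triangle $\tau^{\le u-1}C\to C\to H^u(C)[-u]\to$ in $\Dbgr(R)$. Applying $\RuHom_R(-,R)$ gives a long exact sequence
$$\cdots\to \uExt^j_R(H^u(C)[-u],R)\to \uExt^j_R(C,R)\to \uExt^j_R(\tau^{\le u-1}C,R)\to\cdots.$$
The left term vanishes outside $[-u,d-u]\subseteq[-u,d-s]$. By induction, the right term vanishes outside $[-(u-1),d-s]\subseteq[-u,d-s]$; if $\tau^{\le u-1}C$ is acyclic, that term is simply zero. So for $j$ outside $[-u,d-s]$ both flanking terms vanish and $\uExt^j_R(C,R)=0$. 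The induction runs on $u-s$, with the degenerate case $u<s$ (that is, $C\simeq 0$) being trivial.

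The only real content lies in the base case, namely the bound $\operatorname{injdim} R=d$ for graded Gorenstein $R$. If this cannot be cited directly, I would argue that $\uExt^j_R(M,R)=0$ for $j>d$ by taking a filtration of $M$ with subquotients $(R/\mathfrak p)(i)$ and reducing to $\Ext^j(\k,R)$ via local duality or a depth argument. A technical point to check is that $C$ need not be bounded as a complex, only in cohomology. This is harmless, because $\RuHom$ depends only on the isomorphism class of $C$ in the derived category, and $C$ is quasi-isomorphic to its truncation $\tau^{\ge s}\tau^{\le u}C$.
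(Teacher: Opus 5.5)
Your proof is correct, but it takes a different route from the paper.

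\textbf{The paper's route.} The paper does not induct. After truncating so that $C^j=0$ outside $[s,u]$, it runs the spectral sequence $E_1^{p,q}=H^q_\m(C^p)\Rightarrow H^{p+q}_\m(C)$. Since $H^q_\m(C^p)=0$ outside $0\le q\le d$, this gives $H^j_\m(C)=0$ outside $[s,u+d]$. It then transfers this vanishing to $\uExt^j_R(C,R)$ by graded local duality for the Gorenstein ring $R$. Local duality identifies $\uExt^j_R(C,R(-a))$ with the graded Matlis dual of $H^{d-j}_\m(C)$; the paper's formula suppresses the dual, which is harmless for vanishing.

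\textbf{Your route.} You argue by d\'evissage along the truncation triangles $\tau^{\le u-1}C\to C\to H^u(C)[-u]\to$, reducing to shifted single modules. There the lower bound is trivial and the upper bound is $\uExt^{>d}_R(M,R)=0$. That input is indeed standard: localize at $\m$, where $R_\m$ is Gorenstein local of dimension $d$ and so has injective dimension $d$. Then note that a finitely generated graded module whose localization at $\m$ vanishes is zero. Your bookkeeping with the shift $\uExt^j_R(M[-s],R)=\uExt^{j+s}_R(M,R)$ is right, and so is your handling of unbounded $C$.

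\textbf{What each approach buys.} Your route is more elementary, since it avoids local cohomology and duality altogether. It also proves, verbatim, the more general statement that $\uExt^j_R(C,N)=0$ outside $[-u,e-s]$ for any finitely generated graded $N$ with $\operatorname{injdim} N_\m=e$. So the Gorenstein hypothesis enters only through $\operatorname{injdim} R_\m=d$. The paper's route is a single step with no induction. Moreover, local duality identifies the modules $\uExt^j_R(C,R)$ themselves (up to twist and Matlis duality) with the local cohomology of $C$. That identification carries information about internal degrees that a pure vanishing argument does not.
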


\begin{proof}
Taking appropriate smart truncations, we may assume without loss that we have $C^j = 0$ unless $s \le j \le u$. Let $\m$ denote the homogeneous maximal ideal of $R$, and write $H^*_\m(C)$ for the local cohomology of $C$. The convergent spectral sequence
$
E_1^{p, q} = H^q_\m(C^p) \Rightarrow H^{p + q}_\m(C)
$
implies~$H^j_\m(C) = 0$ unless~$s \le j \le u + d$. Since $R$ is Gorenstein, local duality implies $\uExt^j_R(C, R(-a)) \cong H^{d - j}_\m(C)$, where~$a$ is the Gorenstein parameter of $R$. The result immediately follows. 
\end{proof}

\begin{prop}
\label{prop:vanishing}
Let $C$ be a complex of graded $R$-modules such that $H^j(C) = 0$ for $s \le j \le u$. Suppose there exists $m \in \Z$ such that $H^s(C)_m \ne~0$ and~$H^s(C)_i = 0$ for $i < m$. If $F$ is a minimal free resolution of $C$, then for any~$i \in \Z$, the following hold:
\begin{enumerate}
\item $H^j(F_{\succcurlyeq i})~=~0$ unless $\min\{s + m - i, s\} \le  j \le u$.
\item $H^j(\uHom_R(F, R)_{\prec i}) = 0$ unless $-u\le j \le d - \min\{s + m + i-1, s\}$. 
\end{enumerate}
\end{prop}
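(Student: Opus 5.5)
The plan is to prove (1) from the degree-wise short exact sequence $0 \to F_{\prec i} \to F \to F_{\succcurlyeq i} \to 0$ together with a bound on generator degrees in a minimal resolution. We then deduce (2) from (1) by identifying the truncated dual with the dual of a truncation and applying Proposition~\ref{prop:dual}. Throughout, I read the hypothesis as ``$H^j(C) = 0$ unless $s \le j \le u$'' with $C$ having finitely generated cohomology, as in the rest of the section.

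\emph{Step 1: shape of $F$.} Since $H^j(C) = 0$ for $j > u$, the minimal free resolution may be taken with $F^j = 0$ for $j > u$. Since $H^j(C)=0$ for $j<s$, the generators of $F^s$ lie in degrees $\ge m$. The key claim is that every summand of $F^j$ with $j \le s$ is generated in degree $\ge m + s - j$. I would prove this by descending induction on $j$. Minimality means the differential has entries in $\m$, and $R$ is generated in degree $1$. The two checks are that the lowest generator degree of $F^{s}$ is at least $m$, and that each step leftward raises the minimal generator degree by at least one. I expect this to be the main obstacle: the relevant cycles in $F^{j}$ for $j<s$ are not all boundaries of new generators, so the argument must use exactness and minimality at once. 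If this inductive bound is awkward, I would fall back on the standard identity $\uTor$-style computation, namely that the generator degrees of $F^j$ are the degrees of $\Tor$-type invariants $H^j(F\otimes_R \k)$. I would then bound these by the initial degree of $H^{\le s}$ using a spectral sequence for $C \otimes^{\LL}_R \k$.

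\emph{Step 2: proof of (1).} By Step 1, $F^j_{\prec i} = 0$ whenever $m + s - j \ge i$, that is, whenever $j \le s + m - i$. Also $F^j_{\prec i} = 0$ for $j > u$. The long exact sequence gives $H^j(F) \to H^j(F_{\succcurlyeq i}) \to H^{j+1}(F_{\prec i})$. For $j < \min\{s, s+m-i\}$ we have $H^j(F) = H^j(C) = 0$ and $F^{j+1}_{\prec i} = 0$, so $H^j(F_{\succcurlyeq i}) = 0$. For $j > u$ we have $F^j_{\succcurlyeq i} = 0$. This yields (1).

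\emph{Step 3: proof of (2).} Dualizing sends a summand $R(p)$ to $R(-p)$, which is generated in degree $p$. So $\uHom_R(F,R)_{\prec i}$ consists of the duals of the summands $R(p)$ of $F$ with $p < i$. These are exactly the summands generated in degree $-p > -i$, i.e.\ in degree $\ge 1-i$. Hence
\[
\uHom_R(F,R)_{\prec i} = \uHom_R(F_{\succcurlyeq 1-i}, R).
\]
Now $F_{\succcurlyeq 1-i}$ is a bounded-above complex of finitely generated free modules, so this $\uHom$ computes $\RuHom_R(F_{\succcurlyeq 1-i}, R)$. Each $H^j(F_{\succcurlyeq 1-i})$ is finitely generated, because the $H^j(F_{\prec 1-i})$ are and only finitely many $j$ contribute. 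By (1) with $1-i$ in place of $i$, the cohomology of $F_{\succcurlyeq 1-i}$ is concentrated in $[\min\{s+m+i-1, s\}, u]$. Proposition~\ref{prop:dual} then gives vanishing of $\uExt^j$ unless $-u \le j \le d - \min\{s+m+i-1,s\}$, which is (2).
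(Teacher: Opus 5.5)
Your three steps are the paper's proof. They consist of a lower bound on the generator degrees of $F^j$ for $j\le s$, the observation that $F_{\succcurlyeq i}$ agrees with $F$ in cohomological degrees $\le\min\{s,s+m-i\}$, the identification $\uHom_R(F,R)_{\prec i}=\uHom_R(F_{\succcurlyeq 1-i},R)$, and Proposition~\ref{prop:dual}. Steps 2 and 3 are fine given Step 1. However, Step 1 fails at its base case as soon as $u>s$, and the paper's proof makes the same assertion (``all summands of $F$ are generated in degrees at least $m$'').

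It breaks at your sentence ``since $H^j(C)=0$ for $j<s$, the generators of $F^s$ lie in degrees $\ge m$''. The generators of $F^j$, $j\le s$, must also account for the higher cohomology $H^k(C)$, $s<k\le u$, and the initial degrees of those modules are not constrained by $m$. Concretely, let $R=\k[x]/(x^2)$ and $C=\k\oplus\k(5)[-1]$, so $s=0$, $u=1$, $m=0$. Then $F=K\oplus P[-1]$ with $K^{-p}=R(-p)$ and $P^{-p}=R(5-p)$ for $p\ge 0$, all differentials being multiplication by $x$. So $F^0=R\oplus R(4)$ has a generator in degree $-4<m$.

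The conclusion fails as well. The complex $(P[-1])_{\succcurlyeq 1}$ is $\cdots\to R(-2)\xrightarrow{x}R(-1)\to 0$ with $R(-1)$ in cohomological degree $-5$, so $H^{-5}(F_{\succcurlyeq 1})\cong\k(-1)\neq 0$. But (1) with $i=1$ predicts vanishing outside $[-1,1]$. Dually, $H^{5}(\uHom_R(F,R)_{\prec 0})\cong\k\neq 0$, contradicting (2) with $i=0$. So Step 1 cannot be proved as stated. Your spectral-sequence fallback would in fact show that $F^j$ is generated in degrees $\ge\min_{k\ge j}(m_k+k-j)$, where $m_k$ is the initial degree of $H^k(C)$. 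For $j=s$ this minimum can be smaller than $m=m_s$.

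What your argument (and the paper's) does prove is the statement when $m_k+k-s\ge m$ for all $k$. This holds, for instance, when $u=s$, i.e.\ $C\simeq H^s(C)[-s]$, which is how the proposition is invoked in the algorithm. It also holds when every $H^k(C)$ vanishes in degrees $<m$. In that setting the step you flagged as the main obstacle is routine:
\begin{itemize}
\item The base case holds because $F^s\otimes_R\k\cong H^s(C)\otimes_R\k$ when $u=s$, or by the spectral sequence in general.
\item For the inductive step, if $j<s$ then exactness and minimality give $\ker(F^j\to F^{j+1})=d(F^{j-1})\subseteq\m F^j$. Hence no minimal generator of $F^j$ is a cycle, and each maps to a nonzero element of $\m F^{j+1}$ of the same degree. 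This forces its degree to exceed the smallest generator degree of $F^{j+1}$ by at least one.
\end{itemize}
Only the base case needs the extra hypothesis.
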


\begin{proof}
All summands of $F$ are generated in degrees at least $m$, so if $i \le m$, we have $F_{\succcurlyeq i} = F$, in which case $H^j(F_{\succcurlyeq i}) = 0$ for $s \le j \le u$. Suppose~$i > m$. Since $F$ is minimal,  every summand of~$F^{s-j}$ is generated in degrees at least $m+j$ when $j \ge 0$. Thus, $F^{s-j}_{\succcurlyeq i} = F^{s-j}$ when $m+j \ge i$ and $j \ge 0$, and so $F_{\succcurlyeq i}$ looks as follows:
$$
0 \from F_{\succcurlyeq i}^u \from \cdots \from F_{\succcurlyeq i}^s \from \cdots \from F_{\succcurlyeq i}^{s+m - i + 1} \from F^{s+m - i} \from F^{s+m - i - 1} \from \cdots.
$$
We conclude that $H^j(F_{\succcurlyeq i}) = 0$ unless $s + m-i \le j \le u$. This proves~(1). We next observe that $\uHom_R(F, R)_{\prec i} = \uHom_R(F_{\succcurlyeq -i+1}, R)$. By (1), $H^j(F_{\succcurlyeq -i+1}) = 0$ unless
$
\min\{s + m + i -1, s\} \le j \le u.
$
Proposition~\ref{prop:dual} then implies the result. 
\end{proof}

We now explain our algorithm for computing the functor $\Phi_t$. Objects in $\Dsing(R)$ are bounded complexes of finitely generated $R$-modules; let $C$ be such a complex. Given $\ell \ge 0$, we explain how to compute a length $\ell$ approximation of $\Phi_t(C)$.

\begin{enumerate}
\item Let $F$ be the minimal free resolution of $C$, and set $s \ce \inf\{i \text{ : } H^i(C) \ne 0\}$. The complex~$C$ is isomorphic in $\Dsing(R)$ to the brutal truncation $F^{\le s}$ of $F$ in cohomological degrees $\le s$, i.e. the minimal free resolution of $H^s(C)[-s]$. 

\item Choose $m$ such that $H^s(C)_m \ne 0$, and $H^s(C)_i = 0$ for $i < m$. By Proposition~\ref{prop:vanishing}, we have~$H^j(F_{\succcurlyeq t}) = 0$ unless $c \le  j \le s$, where $c \ce \min\{s + m - t, s\}$. Proposition~\ref{prop:dual} then implies that $H^j \uHom_R(F_{\succcurlyeq t}, R) = 0$ unless $-s \le j \le d -c$. Letting $D$ denote the complex~$\uHom_R(F_{\succcurlyeq t}, R)$ with differential $\del_D$, we conclude that the smart truncation 
\begin{equation}
\label{eqn:window}
0 \to D^{-s} \to D^{-s+1} \to \cdots \to D^{d-c-1} \to \ker(\del_D^{d - c}) \to 0
\end{equation}
is quasi-isomorphic to $D$.

\item Let $G$ be the minimal free resolution of the complex \eqref{eqn:window}. Writing $E \ce \uHom_R(G, R)_{\prec t}$, the desired length $\ell$ approximation of $\Phi_t(C)$ is the sheafification of the complex 
$$
0 \to E^{c-d} \to \cdots \to E^{c-d + \ell} \to 0.
$$
\end{enumerate}

\begin{remark}
By Proposition~\ref{prop:dual}(2), $H^j(E) = 0$ unless $c-d \le j \le d - b$; where, setting $m'$ to be the smallest internal degree in which $H^{-s}(G)$ is nonzero, we define $b$ to be $\min\{-s + m' + t - 1, -s\}$. Thus, $\Phi_t(C)$ is quasi-isomorphic to the sheafification of the following smart truncation of $E$:
$$
0 \to E^{c-d} \to \cdots \to E^{d -b-1} \to \ker(\del^{d -b}_E) \to 0.
$$
\end{remark}

\subsection{The functor $\Psi_t$}

Let $\cC$ be a bounded complex of coherent $\OO_X$-modules. To compute $\Psi_t(\cC)$, we will invoke \cite[Theorem 2.15]{BDLS}, which provides an effective method for computing $\Ext$ between bounded complexes of coherent sheaves. To set the stage, we must fix some notation. Choose a bounded complex $C$ of finitely generated $R$-modules such that $\widetilde{C} = \cC$. Let $\inf(C) \ce \inf \{i \text{ : } C^i = 0\}$, and define $\sup(C)$ similarly. We write $\dim(C)$ for the Krull dimension of the support of $C$. We denote the graded Betti numbers of $C$ over $S$ by
$
\beta_{i, j}(C) \ce \dim_\k H^{-i}(C \otimes_S^\LL \k)_j,
$
and we set
$
\overline{a}_i(C) \ce \sup\{j \text{ : } \beta_{i, j}(C) \ne 0\}.
$
Given an $S$-module $M$, let $\pd_S(M)$ denote its projective dimension. It follows from \cite[Theorem 2.15]{BDLS} that, if $r$ is an integer satisfying the inequality
\begin{equation}
\label{eqn:r}
r \ge \max\{\overline{a}_i(C^j) \text{ : } 0 \le i \le \on{pd}_S(C^j), \text{  } \inf(C) \le j \le \sup(C)\} - n,
\end{equation}
then there is a canonical quasi-isomorphism $\RuHom_R(R_{\ge r}, C)_{\ge 0} \xra{\simeq} \bigoplus_{j \ge 0} \RR\Gamma(X, \cC(j))= \Psi_a(\mathcal{C})$, where $a$ is the Gorenstein parameter of $R$. We conclude that, when $r$ satisfies the inequality~\eqref{eqn:r},
\begin{equation}
\label{eqn:psitformula}
\Psi_t(\mathcal{C}) = \left(\RuHom_R(R_{\ge r}, C(t-a))_{\ge 0}\right)(a-t) = \RuHom_R(R_{\ge r}, C)_{\ge t-a}
\end{equation}
where the first equality follows from Lemma~\ref{lem:twist}(2), and the second from a direct calculation. As illustrated in \cite{BDLS}, the formula~\eqref{eqn:psitformula} is straightforward to implement in \verb|Macaulay2|. 

%\section{Examples}
%Do all of the examples in Section 2, and maybe one or two others. Also illustrate functoriality. 

\bibliographystyle{amsalpha}
\bibliography{references}
\Addresses
\end{document}